\numberwithin{equation}{section}
    \let\NAT@orig@hyper@\NAT@hyper@
    \renewcommand*{\NAT@hyper@}[1]{%
      \begingroup
        \let\NAT@orig@date\NAT@date
        \def\NAT@date{\textcolor{blue}{\NAT@orig@date}}%
        \NAT@orig@hyper@{#1}%
      \endgroup
    }%
\theoremstyle{plain} 
\newtheorem{thm}{Theorem}[section]%
\newtheorem{lem}[thm]{Lemma}%
\newtheorem{cor}[thm]{Corollary}%
\theoremstyle{definition}
\newtheorem{rem}{Remark}%
\numberwithin{equation}{section}
\def\MC{ {\mathcal{C}} }
\def\MN{ {\mathcal{N}} }
\def\MS{ {\mathcal{S}} }
\def\tr{ {\mathrm{tr}}}
\def\ve{ \varepsilon }
\def\rk{ {\mathrm{rank}} }
\def\E{ {\mathrm{E}}}
\def\V{ {\mathrm{Cov}}}
\def\fa{ {\rm for \ all} }
\def\an{ {\rm and} }
\def\diag{ {\rm diag} }
\def\ftd{ {\rm for \ two \ different} }
\title{\Large\textbf{Parameter-free conditions for equality of general ridge estimators under spatial error models}}
\author[1,2]{Hirai Mukasa}
\affil[1]{Graduate School of Mathematics, Kyushu University}
\affil[2]{National Institute of Technology, Kagoshima College}
\date{}
\begin{document}
\maketitle

\begin{abstract}
This paper investigates when two general ridge estimators coincide under spatial error models. 
First, in the general linear model, we derive a necessary and sufficient condition based on the commutativity of an extended dispersion matrix and an orthogonal projector, thereby extending the classical result of Zyskind. 
Next, we establish parameter-free conditions for first-order spatial autoregressive and spatial moving average processes. 
Also, we obtain a necessary and sufficient condition valid for all values of the spatial correlation coefficient in the specified parameter range and characterize all penalty matrices for which the two general ridge estimators coincide.
A numerical experiment illustrates the theoretical results and shows that the proposed conditions can simplify the two-step estimation procedure by eliminating the need to estimate the spatial correlation coefficient.
\\
\\
\textbf{Keywords}: Equality of estimators, General ridge estimation, Spatial error model, Two-step estimation\\
\textbf{2020 Mathematics Subject Classification}: 62J05, 62J07, 62M30
\end{abstract}

\section{Introduction}\label{sec:1}
For positive integers $m_1$ and $m_2$, let $\mathbb{R}^{m_1 \times m_2}$ denote the set of $m_1 \times m_2$ real matrices.
For a positive integer $m$, let $\MS^+(m)$ and $\MS^N(m)$ denote the sets of $m \times m$ positive definite and positive semidefinite matrices, respectively. 
Furthermore, let $\boldsymbol{I}_m \in \MS^+(m)$ denote the $m \times m$ identity matrix.
The transpose, rank, trace, column space, and null space of a matrix $\boldsymbol{A}$ are denoted by $\boldsymbol{A}^\top$, $\rk(\boldsymbol{A})$, $\tr(\boldsymbol{A})$, $\MC(\boldsymbol{A})$, and $\MN(\boldsymbol{A})$, respectively.
$\|\cdot\|_{\mathrm{F}}$ stands for the Frobenius norm.
For matrices $\boldsymbol{A}, \boldsymbol{B} \in \mathbb{R}^{m_1 \times m_2}$, the Frobenius inner product is defined by $\left\langle \boldsymbol{A},\boldsymbol{B}\right\rangle_{\mathrm{F}}
= \tr(\boldsymbol{A}^{\top}\boldsymbol{B})$.
Let $\otimes$ denote the Kronecker product.
The expectation vector and covariance matrix of a random vector $\boldsymbol{v}$ are denoted by $\E[\boldsymbol{v}]$ and $\V(\boldsymbol{v})$, respectively.

Consider the general linear model
\begin{align}\label{glm}
 \boldsymbol{y} = \boldsymbol{X} \boldsymbol{\beta} +\boldsymbol{u}, \quad \E[\boldsymbol{u}] = \boldsymbol{0}, \quad \V(\boldsymbol{u}) = \sigma^2 \boldsymbol{\Omega},  
\end{align}
where $\boldsymbol{y}\in \mathbb{R}^n$ is an observable random vector, $\boldsymbol{X} \in \mathbb{R}^{n \times k}$ is a design matrix $(n>k)$ with rank $k$, $\boldsymbol{\beta} \in \mathbb{R}^k$ is an unknown regression coefficient vector, $\sigma^2 >0$ is an unknown scalar, and $\boldsymbol{\Omega} \in \MS^+(n)$ is a dispersion matrix.
The case where $\boldsymbol{\Omega}\neq\boldsymbol{I}_n$ is of 
interest because it allows for nonspherical covariance structures in the disturbances.
In particular, spatial error models have been
widely studied in spatial econometrics and applied to regional economic and
social phenomena.
For example, \cite{RefV24} employed a spatial error panel model
to examine the relationship between income inequality and household debt
across U.S. states, while \cite{RefK24} applied this
model to analyse the political gender gap across Japanese prefectures.
Recent methodological developments include causal inference with spatial
econometric models in epidemiologic research and robustness analysis of
spatial econometric estimators; see \cite{RefC25} and
\cite{RefNS25}.
Moreover, \cite{RefTE24} considered a spatial autoregressive panel data model in which the disturbances follow a spatial moving average process.
In light of these developments, we focus on spatial error models.

To estimate $\boldsymbol{\beta}$ in the general linear model, the generalized least squares estimator (GLSE) and the ordinary least squares estimator (OLSE) have been widely used as classical unbiased estimators.
However, when the explanatory variables are highly correlated, these estimators may have large variances, resulting in unstable estimates. 
A representative method for alleviating this problem is ridge estimation, proposed by \cite{RefHK70}. 
More generally, for $\boldsymbol{\Phi}\in\MS^+(n)$ and $\boldsymbol{K}\in\MS^N(k)$, we consider the following regularization problem:
\begin{align}\label{OP}
\min_{\boldsymbol{b}\in\mathbb{R}^{k}}
\left\{
(\boldsymbol{y}-\boldsymbol{X}\boldsymbol{b})^{\top}
\boldsymbol{\Phi}^{-1}
(\boldsymbol{y}-\boldsymbol{X}\boldsymbol{b})
+
\boldsymbol{b}^{\top}\boldsymbol{K}\boldsymbol{b}
\right\}.
\end{align}
The unique solution $\boldsymbol{b}_*$ of \eqref{OP} is
\begin{align}\label{GRE}
\hat{\boldsymbol{\beta}}_{GR}(\boldsymbol{\Phi},\boldsymbol{K}) = (\boldsymbol{X}^\top \boldsymbol{\Phi}^{-1}\boldsymbol{X} + \boldsymbol{K})^{-1} \boldsymbol{X}^\top \boldsymbol{\Phi}^{-1}\boldsymbol{y},
\end{align}
which is the so-called general ridge estimator \citep{RefR75}.

Hereafter, we consider the case where $\boldsymbol{\Phi} \in \{\boldsymbol{\Omega}, \boldsymbol{I}_n\}$ and $\boldsymbol{K}\in\MS^N(k)$ is fixed.
The class of general ridge estimators includes several well-known estimators as special cases.
For instance, $\hat{\boldsymbol{\beta}}_{GR}(\boldsymbol{\Omega},\boldsymbol{0})$ coincides with GLSE, whereas $\hat{\boldsymbol{\beta}}_{GR}(\boldsymbol{I}_n,\boldsymbol{0})$ coincides with OLSE.
Moreover, $\hat{\boldsymbol{\beta}}_{GR}(\boldsymbol{I}_n, \lambda\boldsymbol{I}_k)$ reduces to the ordinary ridge estimator, where $\lambda$ is a positive constant.
When $\boldsymbol{\Omega}$ is known, $\hat{\boldsymbol{\beta}}_{GR}(\boldsymbol{\Omega},\boldsymbol{K})$ possesses several desirable properties such as linear sufficiency and admissibility within the class of linear estimators under the squared loss \citep{RefM96}.
When $\boldsymbol{K} \in \MS^+(k)$, $\hat{\boldsymbol{\beta}}_{GR}(\boldsymbol{\Omega}, \boldsymbol{K})$ is obtained as an optimal estimator under a minimax criterion for the risk function associated with squared loss.
For these properties, see, for example, \cite{RefR76, RefD83, RefBM88, RefAS00, RefAS11, RefM26}.
In particular, $\hat{\boldsymbol{\beta}}_{GR}(\boldsymbol{\Omega}, \boldsymbol{0})$ is the best linear unbiased estimator for $\boldsymbol{\beta}$ in the general linear model.
Hence, it is natural to use $\hat{\boldsymbol{\beta}}_{GR}(\boldsymbol{\Omega},\boldsymbol{K})$ as typical linear estimators when $\boldsymbol{\Omega}$ is known.

However, $\boldsymbol{\Omega}$ usually involves unknown parameters, such as a spatial correlation coefficient $\rho$.
This means that $\hat{\boldsymbol{\beta}}_{GR}(\boldsymbol{\Omega},\boldsymbol{K})$ cannot be directly used in practice.
So we adopt the following two-step estimation procedure.\vspace{1ex}\\
\vspace{1ex}
\textbf{Step 1.} Estimate $\rho$ by $\hat{\rho}$ and construct $\hat{\boldsymbol{\Omega}} = \boldsymbol{\Omega}(\hat{\rho})$.\\
\vspace{1ex}
\textbf{Step 2.} Compute $\hat{\boldsymbol{\beta}}_{GR}(\hat{\boldsymbol{\Omega}},\boldsymbol{K})$.\\
The two-step estimation procedure may be affected by the estimation error in $\hat{\rho}$.
To eliminate the need for this process, it is natural to ask under what conditions $\hat{\boldsymbol{\beta}}_{GR} (\boldsymbol{\Omega},\boldsymbol{K})$ coincides with $\hat{\boldsymbol{\beta}}_{GR} (\boldsymbol{I}_n,\boldsymbol{K})$.
In particular, when $\boldsymbol{K} = \boldsymbol{0}$, this problem reduces to the equality between OLSE and GLSE. 
\cite{RefK68} characterized  this equality in terms of the invariance of $\MC(\boldsymbol{X})$ under $\boldsymbol{\Omega}$, whereas \cite{RefZ67} established an equivalent condition based on the commutativity of $\boldsymbol{\Omega}$ and the orthogonal projector onto $\MC(\boldsymbol{X})$. For spatial error models, \cite{RefG01} derived parameter-free conditions under which OLSE coincides with GLSE.
Criteria for determining the equality of general ridge estimators in the general linear model have been proposed by \cite{RefTK20} and \cite{RefMT25}.
In addition to these existing results, we derive a new necessary and sufficient condition under which two general ridge estimators coincide based on the result of \cite{RefZ67}.
However, these general conditions involve unknown parameters and are not directly applicable in practice.
Therefore, this paper also derives parameter-free conditions for verifying the equality of general ridge estimators under spatial error models.

This paper is organized as follows.
Section~\ref{sec:2} introduces spatial error models considered in this paper. 
Section~\ref{sec:3} derives the main results on the equality of general ridge estimators. Section~\ref{sec:4} presents a numerical experiment to illustrate the theoretical findings. 
Finally, some concluding remarks are presented in Section~\ref{sec:5}.
All proofs are provided in the Appendix.

\section{Spatial error models}\label{sec:2}
In this section, we introduce the spatial error models with first-order spatial autoregressive (SAR(1)) and spatial moving average (SMA(1)) processes. 
Then we derive the corresponding dispersion matrices and discuss the admissible values of the spatial correlation coefficient.

Let the study area be partitioned into $n$ nonoverlapping regions $R_1, \ldots, R_n$.
Spatial dependence among these regions is represented by a known weights matrix $\boldsymbol{W} = (w_{ij})\in \mathbb{R}^{n \times n}$, where $w_{ij} \geq 0$ measures the spatial influence of $R_j$ on $R_i$.
It is usually assumed that $w_{ii} = 0$ for all $i = 1,\ldots, n$.
The weights matrix is often row-standardized so that
\begin{align*}
\sum_{j=1}^n w_{ij} = 1, \quad i = 1, \ldots, n.
\end{align*}
Note that $\boldsymbol{W}$ need not be symmetric, since the spatial influence of $R_j$ on $R_i$ may differ from that of $R_i$ on $R_j$.\\
\underline{\textbf{SAR(1) process}}\\
Suppose that the disturbance vector $\boldsymbol{u}$ in \eqref{glm} follows SAR(1) process
\begin{align}\label{SAR}
\boldsymbol{u} = \rho \boldsymbol{W}\boldsymbol{u} + \boldsymbol{\ve},
\end{align}
where $\rho \in \mathbb{R}$ is an unknown spatial correlation coefficient and $\boldsymbol{\ve} \in \mathbb{R}^n$ is a random vector such that $\E[\boldsymbol{\ve}] = \boldsymbol{0}$ and $\V(\boldsymbol{\ve}) = \sigma^2 \boldsymbol{I}_n$.
If $\boldsymbol{I}_n - \rho \boldsymbol{W}$ is nonsingular, then \eqref{SAR} can be written as $\boldsymbol{u} = (\boldsymbol{I}_n - \rho \boldsymbol{W})^{-1}\boldsymbol{\ve}$.
Thus, $\boldsymbol{\Omega}$ can be expressed as 
\begin{align*}
\boldsymbol{\Omega} = (\boldsymbol{I}_n - \rho \boldsymbol{W})^{-1}(\boldsymbol{I}_n - \rho \boldsymbol{W}^\top)^{-1}.
\end{align*}
Hereafter, we discuss the nonsingularity of $\boldsymbol{\Omega}$.
If there exists a matrix norm $\|\cdot\|$ such that $|\rho| \, \|\boldsymbol{W}\| < 1$, then $\boldsymbol{\Omega}$ is nonsingular.  
Each row of $\boldsymbol{W}$ sums to one,
so the maximum row sum matrix-norm is 
\begin{align*}
\|\boldsymbol{W}\|_{\infty} = \displaystyle \max_{1 \leq i \leq n} \sum_{j=1}^n |w_{ij}| = \displaystyle \max_{1 \leq i \leq n} \sum_{j=1}^n w_{ij} = 1.
\end{align*}
This implies that if $|\rho| < 1$, then $\boldsymbol{\Omega}$ is nonsingular.\\
\underline{\textbf{SMA(1) process}}\\
Similarly, we assume that $\boldsymbol{u}$ in \eqref{glm} follows SMA(1) process
\begin{align}\label{SMA}
\boldsymbol{u} = \rho \boldsymbol{W}\boldsymbol{\ve} + \boldsymbol{\ve},
\end{align}
where $\E[\boldsymbol{\ve}] = \boldsymbol{0}$ and $\V(\boldsymbol{\ve}) = \sigma^2 \boldsymbol{I}_n$.
Since \eqref{SMA} can equivalently be written as $\boldsymbol{u} = (\boldsymbol{I}_n + \rho \boldsymbol{W})\boldsymbol{\ve}$, it follows that 
\begin{align*}
\boldsymbol{\Omega} = (\boldsymbol{I}_n + \rho \boldsymbol{W})(\boldsymbol{I}_n + \rho \boldsymbol{W}^\top).
\end{align*}
By a similar argument, $|\rho| < 1$ is a sufficient condition for $\boldsymbol{\Omega}$ to be nonsingular.

Above all, $\boldsymbol{\Omega} = \boldsymbol{\Omega}(\rho)$ can be expressed as
\begin{align}\label{omega}
\boldsymbol{\Omega}(\rho) = \left\{ \,
    \begin{aligned}
    & (\boldsymbol{I}_n - \rho \boldsymbol{W})^{-1}(\boldsymbol{I}_n - \rho \boldsymbol{W}^\top)^{-1} \quad (\text{\textbf{SAR(1) process}})\\
    & (\boldsymbol{I}_n + \rho \boldsymbol{W})(\boldsymbol{I}_n + \rho \boldsymbol{W}^\top) \quad (\text{\textbf{SMA(1) process}})
    \end{aligned}
\right.,
\end{align}
and is nonsingular if $|\rho| < 1$.
\section{Equality of two general ridge estimators}\label{sec:3}
In this section, we first derive a necessary and sufficient condition for the equality
\begin{align}\label{G2E}
\hat{\boldsymbol{\beta}}_{GR}(\boldsymbol{\Omega}, \boldsymbol{K}) = \hat{\boldsymbol{\beta}}_{GR}(\boldsymbol{I}_n, \boldsymbol{K}) \quad \fa \ \ \boldsymbol{y} \in \mathbb{R}^n
\end{align}
to hold under the general linear model.
Then we specialize this condition to the spatial error models considered in Section~\ref{sec:2}.

\subsection{Equality of general ridge estimators under the general linear model}\label{sec:31}
\cite{RefMT25} derived the necessary and sufficient condition for the equality \eqref{G2E}:
\begin{align}\label{Mcond}
\MC\left(\begin{matrix}
\boldsymbol{\Omega X}\\
\boldsymbol{K}
\end{matrix}\right) = \MC\left(\begin{matrix}
\boldsymbol{X}\\
\boldsymbol{K}
\end{matrix}\right).
\end{align}
Since both matrices have full column rank $k$, \eqref{Mcond} holds if and only if there exists a nonsingular matrix $\boldsymbol{G} \in \mathbb{R}^{k \times k}$ such that $\boldsymbol{\Omega}\boldsymbol{X} = \boldsymbol{XG}$ and $\boldsymbol{K} = \boldsymbol{KG}$.
 Note that if $\boldsymbol{K} = \boldsymbol{0}$, \eqref{Mcond} becomes $\MC(\boldsymbol{\Omega}\boldsymbol{X}) = \MC(\boldsymbol{X})$, which is equivalent to the result of \cite{RefK68}.

To simplify the notation, let 
\begin{align*}
\widetilde{\boldsymbol{\Omega}} = 
\begin{pmatrix}
\boldsymbol{\Omega} & \boldsymbol{0}\\
\boldsymbol{0} & \boldsymbol{I}_k
\end{pmatrix}
\in \MS^+(n+k) \quad \an \quad \boldsymbol{Z} = 
\begin{pmatrix}
\boldsymbol{X}\\
\boldsymbol{K}
\end{pmatrix} \in \mathbb{R}^{(n+k) \times k}.
\end{align*}
Since $\boldsymbol{Z}$ has full column rank, let $\boldsymbol{P}_Z  = \boldsymbol{Z}(\boldsymbol{Z}^\top \boldsymbol{Z})^{-1}\boldsymbol{Z}^\top \in \mathbb{R}^{(n+k) \times (n+k)}$ denote the orthogonal projector onto $\MC(\boldsymbol{Z})$.
Then \eqref{Mcond} can be written as
\begin{align}\label{Mcond2}
\MC(\widetilde{\boldsymbol{\Omega}}\boldsymbol{Z}) = \MC(\boldsymbol{Z}).
\end{align}
\eqref{Mcond2} means that the column space of $\boldsymbol{Z}$ is invariant under $\widetilde{\boldsymbol{\Omega}}$.
Then we obtain the following equivalent condition by using the column space relationship.
\begin{thm}\label{thm1}
The equality \eqref{G2E} holds if and only if
\begin{align}\label{Zcond}
\widetilde{\boldsymbol{\Omega}}\boldsymbol{P}_Z = \boldsymbol{P}_Z\widetilde{\boldsymbol{\Omega}}.
\end{align}
\end{thm}

\begin{rem}
(i) Theorem~\ref{thm1} characterizes the equality of the two general ridge estimators in terms of the commutativity of $\widetilde{\boldsymbol{\Omega}}$ and $\boldsymbol{P}_Z$.\\
(ii) When $\boldsymbol{K} = \boldsymbol{0}$, we have 
\begin{align*}
\boldsymbol{Z} = 
\begin{pmatrix}
\boldsymbol{X}\\
\boldsymbol{0}
\end{pmatrix} \quad  \an \quad 
\boldsymbol{P}_Z = 
\begin{pmatrix}
\boldsymbol{P}_X & \boldsymbol{0}\\
\boldsymbol{0} & \boldsymbol{0}
\end{pmatrix},
\end{align*}
where $\boldsymbol{P}_X = \boldsymbol{X}(\boldsymbol{X}^\top\boldsymbol{X})^{-1}\boldsymbol{X}^\top$.
In this case, \eqref{Zcond} reduces to $\boldsymbol{\Omega}\boldsymbol{P}_X = \boldsymbol{P}_X\boldsymbol{\Omega}$.
Thus, \eqref{Zcond} can be viewed as an extension of Zyskind's condition to general ridge estimators.
\end{rem}

\subsection{Equality of general ridge estimators under spatial error models}\label{sec:32}
In Section~\ref{sec:31}, we characterized the equality of two general ridge estimators in terms of two conditions. 
However, under spatial error models, $\boldsymbol{\Omega}$ depends on the unknown parameter $\rho$, and hence we need more practical conditions to check their equality. 
In this subsection, we provide algebraically verifiable conditions that do not depend on $\rho$.

We apply condition \eqref{Mcond} to spatial error models as follows.
\begin{thm}\label{thm2}
Let $\boldsymbol{\Omega}$ be written in the form \eqref{omega} with $\rho \neq 0$.
If 
\begin{align}\label{new}
\MC\left(\begin{matrix}
\boldsymbol{WX}\\
\boldsymbol{0}
\end{matrix}\right) \subseteq \MC\left(\begin{matrix}
\boldsymbol{X}\\
\boldsymbol{K}
\end{matrix}\right) \quad \an \quad \MC\left(\begin{matrix}
\boldsymbol{W}^\top \boldsymbol{X}\\
\boldsymbol{0}
\end{matrix}\right) \subseteq \MC\left(\begin{matrix}
\boldsymbol{X}\\
\boldsymbol{K}
\end{matrix}\right),
\end{align}
then the equality \eqref{G2E} holds.
\end{thm}

\begin{rem}\label{rm5}
(a) When $\boldsymbol{K}=\boldsymbol{0}$, the condition \eqref{new} can be written as $\MC(\boldsymbol{WX}) \subseteq \MC(\boldsymbol{X})$ and $\MC(\boldsymbol{W}^\top \boldsymbol{X}) \subseteq \MC(\boldsymbol{X})$.
Thus, it reduces to the result of \cite{RefG01}.\\
(b) Suppose that $\boldsymbol{K} \in \MS^+(k)$.
If $\MC(\boldsymbol{X}) \subseteq \mathcal{N}(\boldsymbol{W})  \cap  \mathcal{N}(\boldsymbol{W}^\top)$, then the equality \eqref{G2E} holds.\\
(c) \eqref{new} is not a necessary condition for the equality \eqref{G2E} in general.
For example, let
\begin{align*}
\boldsymbol{W} = \begin{pmatrix}
0 & 1 & 0 & 0 & 0\\
0 & 0 & 1 & 0 & 0\\
0 & 0 & 0 & 1 & 0\\
0 & 0 & 0 & 0 & 1\\
1 & 0 & 0 & 0 & 0
\end{pmatrix}, \
\boldsymbol{X} = \begin{pmatrix}
1 & 0\\
\cos \theta & \sin \theta \\
\cos 2\theta & \sin2 \theta \\
\cos 3\theta & \sin 3\theta \\
\cos 4\theta & \sin 4\theta 
\end{pmatrix}, \ \rho = -2\cos\theta, \ \theta = \cfrac{2\pi}{5}.
\end{align*}
Assume in addition that $\boldsymbol{K}\in \MS^+(k)$, and
$\boldsymbol{\Omega}$ is of the form $\boldsymbol{\Omega} = (\boldsymbol{I}_n + \rho \boldsymbol{W})(\boldsymbol{I}_n + \rho \boldsymbol{W}^\top)$.
Then it follows from $\boldsymbol{W}^5 = \boldsymbol{I}_5$ that
each eigenvalue of $\boldsymbol{W}$ is $\exp\left(2 m\pi i/5\right)$ $(m = 0,\ldots,4)$, and one of the eigenvectors can be written as 
$\boldsymbol{v} = (
1, w, w^2, w^3, w^4)^\top$, where $w = \exp\left(2\pi i/5\right)$ with $i$ being the imaginary unit.
Since  $\boldsymbol{Wv} = w\boldsymbol{v}$ and $\boldsymbol{W}^\top \boldsymbol{v} = w^{-1}\boldsymbol{v}$, it holds that
\begin{align*}
\boldsymbol{\Omega v} = (1 + \rho w)(1 + \rho w^{-1})\boldsymbol{v} = (1 + 2 \rho \cos\theta + \rho^2)\boldsymbol{v} = \boldsymbol{v}.
\end{align*}
Therefore, we have $\boldsymbol{\Omega X} = \boldsymbol{X} $, and thus the equality \eqref{G2E} holds.
However, a straightforward calculation yields that $\boldsymbol{WX} \neq \boldsymbol{0}$, so \eqref{new} is not satisfied.
\end{rem}
Hereafter, let
\begin{align*}
\mathcal{P} = \{\rho \in \mathbb{R} \ | \ \rho \neq 0  \ \an \ |\rho|<1 \}.
\end{align*}
As mentioned in Remark~\ref{rm5}, the converse of Theorem~\ref{thm2} is not true.
Then we provide the necessary and sufficient conditions.
\begin{thm}\label{thm3}
For each $\rho \in \mathcal{P}$, let $\boldsymbol{\Omega} = (\boldsymbol{I}_n + \rho \boldsymbol{W})(\boldsymbol{I}_n + \rho \boldsymbol{W}^\top)$.
Then the following statements are equivalent:
\begin{itemize}
\item[\textup{(i)}] $\MC\left(\begin{matrix}
\boldsymbol{\Omega X}\\
\boldsymbol{K}
\end{matrix}\right) = \MC\left(\begin{matrix}
\boldsymbol{X}\\
\boldsymbol{K}
\end{matrix}\right) \quad \fa \ \ \rho \in \mathcal{P}$.
\item[\textup{(ii)}] $\MC\left(\begin{matrix}
\boldsymbol{\Omega X}\\
\boldsymbol{K}
\end{matrix}\right) = \MC\left(\begin{matrix}
\boldsymbol{X}\\
\boldsymbol{K}
\end{matrix}\right) \quad \ftd \ \ \rho_1, \rho_2 \in \mathcal{P}$.
\item[\textup{(iii)}] $\MC\left(\begin{matrix}
(\boldsymbol{W}+\boldsymbol{W}^\top)\boldsymbol{X}\\
\boldsymbol{0}
\end{matrix}\right) \subseteq \MC\left(\begin{matrix}
\boldsymbol{X}\\
\boldsymbol{K}
\end{matrix}\right) \quad  \an \quad \MC\left(\begin{matrix}
\boldsymbol{W}\boldsymbol{W}^\top\boldsymbol{X}\\
\boldsymbol{0}
\end{matrix}\right) \subseteq \MC\left(\begin{matrix}
\boldsymbol{X}\\
\boldsymbol{K}
\end{matrix}\right)$.
\end{itemize}
\end{thm}

\begin{rem}\label{rem33}
(a) When $\boldsymbol{\Omega}$ is of the form $\boldsymbol{\Omega} = (\boldsymbol{I}_n - \rho \boldsymbol{W})^{-1}(\boldsymbol{I}_n - \rho \boldsymbol{W}^\top)^{-1}$, the condition (iii) is replaced by
\begin{align*}
\MC\left(\begin{matrix}
(\boldsymbol{W}+\boldsymbol{W}^\top)\boldsymbol{X}\\
\boldsymbol{0}
\end{matrix}\right) \subseteq \MC\left(\begin{matrix}
\boldsymbol{X}\\
\boldsymbol{K}
\end{matrix}\right) \quad  \an \quad \MC\left(\begin{matrix}
\boldsymbol{W}^\top\boldsymbol{W}\boldsymbol{X}\\
\boldsymbol{0}
\end{matrix}\right) \subseteq \MC\left(\begin{matrix}
\boldsymbol{X}\\
\boldsymbol{K}
\end{matrix}\right).
\end{align*}
(b) The condition (iii) is also equivalent to 
\begin{align*}
\boldsymbol{K}(\boldsymbol{X}^\top \boldsymbol{X})^{-1}\boldsymbol{X}^\top (\boldsymbol{W} + \boldsymbol{W}^\top) \boldsymbol{X} = \boldsymbol{K}(\boldsymbol{X}^\top \boldsymbol{X})^{-1}\boldsymbol{X}^\top \boldsymbol{W} \boldsymbol{W}^\top \boldsymbol{X}   = \boldsymbol{0}
\end{align*}
and 
\begin{align*}
(\boldsymbol{X}^\bot)^\top (\boldsymbol{W} + \boldsymbol{W}^\top) \boldsymbol{X}
= (\boldsymbol{X}^\bot)^\top \boldsymbol{W} \boldsymbol{W}^\top \boldsymbol{X} = \boldsymbol{0},
\end{align*}
where $\boldsymbol{X}^\bot \in \mathbb{R}^{n \times (n-k)}$ is any matrix such that $\boldsymbol{X}^\top \boldsymbol{X}^\bot = \boldsymbol{0}$ and $\rk( \boldsymbol{X}^\bot) = n-k$.\\
(c) Condition (iii) enables us to check whether the equality \eqref{G2E} holds without specifying the value of $\rho$.\\
(d) When $\boldsymbol{K}\in \MS^+(k)$, the condition (iii) is equivalent to 
\begin{align*}
\MC(\boldsymbol{X}) \subseteq \mathcal{N}(\boldsymbol{W}+\boldsymbol{W}^\top)  \cap  \mathcal{N}(\boldsymbol{W}\boldsymbol{W}^\top).    
\end{align*}
\end{rem}

From Theorem~\ref{thm3}, we obtain the following characterization of all matrices $\boldsymbol{K} \in \MS^N(k)$ for which the equality \eqref{G2E} holds for all $\boldsymbol{\rho} \in \mathcal{P}$.

\begin{cor}\label{cor1}
For each $\rho \in \mathcal{P}$, let $\boldsymbol{\Omega} = (\boldsymbol{I}_n + \rho \boldsymbol{W})(\boldsymbol{I}_n + \rho \boldsymbol{W}^\top)$.
Suppose that $\MC((\boldsymbol{W} + \boldsymbol{W}^\top)\boldsymbol{X}) \subseteq \MC(\boldsymbol{X})$ and $\MC(\boldsymbol{W}\boldsymbol{W}^\top\boldsymbol{X}) \subseteq \MC(\boldsymbol{X})$.
Define 
\begin{align*}
& \boldsymbol{R}_1 = (\boldsymbol{X}^\top \boldsymbol{X})^{-1}\boldsymbol{X}^\top(\boldsymbol{W} + \boldsymbol{W}^\top)\boldsymbol{X},\\
& \boldsymbol{R}_2 = (\boldsymbol{X}^\top \boldsymbol{X})^{-1}\boldsymbol{X}^\top\boldsymbol{W}\boldsymbol{W}^\top\boldsymbol{X},\\
& \boldsymbol{R} = (\boldsymbol{R}_1 : \boldsymbol{R}_2), \quad q = k - \rk(\boldsymbol{R}),
\end{align*}
and choose $\boldsymbol{Q} \in \mathbb{R}^{k \times q}$ whose columns form an orthonormal basis for $\MN(\boldsymbol{R}^\top)$.
Then the statement (iii) in Theorem~\ref{thm3} is equivalent to $\boldsymbol{K} = \boldsymbol{Q}\boldsymbol{H}\boldsymbol{Q}^\top$ for some $\boldsymbol{H} \in \MS^N(q)$.
\end{cor}

\begin{rem}
(a) The matrix $\boldsymbol{Q}$ in Corollary~\ref{cor1} depends only on $\boldsymbol{X}$ and $\boldsymbol{W}$, and hence the characterization of $\boldsymbol{K} \in \MS^N(k)$ is independent of $\rho$.\\
(b) It holds that $\rk(\boldsymbol{K}) = \rk(\boldsymbol{H}) \leq q$, and every rank from $0$ to $q$ can be attained by an appropriate choice of $\boldsymbol{H} \in \MS^N(q)$.
If $q = 0$, then $\boldsymbol{K} = \boldsymbol{0}$ is the only matrix for which the equality \eqref{G2E} holds.
If $0<q<k$, there exist nonzero admissible matrices, but they are necessarily singular.
If $q = k$, or equivalently $\boldsymbol{R} = \boldsymbol{0}$, then the equality \eqref{G2E} holds for every $\boldsymbol{K} \in \MS^N(k)$.
In particular, there exists a matrix $\boldsymbol{K} \in \MS^+(k)$ if and only if $q = k$.\\
(c) The class of matrices characterized in Corollary~\ref{cor1} can be written as
\begin{align*}
\mathcal{K} = \left\{\boldsymbol{K} \in \MS^N(k) \ | \ \boldsymbol{KR} = \boldsymbol{0}\right\}.
\end{align*}
This set forms a closed convex cone. Moreover, since
\begin{align*}
\boldsymbol{KR} = \boldsymbol{0} \quad \Leftrightarrow \quad \left\langle \boldsymbol{K},\boldsymbol{R}\boldsymbol{R}^\top\right\rangle_{\mathrm{F}} = \boldsymbol{0},
\end{align*}
the set $\mathcal{K}$ is an exposed face of the cone $\MS^N(k)$.
\end{rem}

For simplicity, let us denote
\begin{align*}
\widetilde{\boldsymbol{W}} = 
\begin{pmatrix}
\boldsymbol{W} & \boldsymbol{0}\\
\boldsymbol{0} & \boldsymbol{0}
\end{pmatrix}
\in \mathbb{R}^{(n+k) \times (n+k)}.
\end{align*}
The following theorem provides a simple sufficient condition for the equality \eqref{G2E} in terms of the commutativity of $\widetilde{\boldsymbol{W}}$ and $\boldsymbol{P}_Z$.
\begin{thm}\label{thm4}
Let $\boldsymbol{\Omega}$ be given by \eqref{omega} with $\rho \neq 0$.
If 
\begin{align}\label{new2}
\widetilde{\boldsymbol{W}}\boldsymbol{P}_Z = \boldsymbol{P}_Z\widetilde{\boldsymbol{W}},
\end{align}
then the equality \eqref{G2E} holds.
\end{thm}
In general, \eqref{new2} is not necessary for the equality \eqref{G2E} to hold.
However, if we impose the additional assumptions, the sufficient condition in Theorem~\ref{thm4} becomes necessary.
\begin{cor}\label{cor2}
Let $\boldsymbol{\Omega}$ be given by \eqref{omega}.
Suppose further that $\boldsymbol{W}$ is symmetric and $\rho \in \mathcal{P}$.
Then the equality \eqref{G2E} holds if and only if \eqref{new2}.
\end{cor}

\section{Numerical experiment}\label{sec:4}
In this section, we numerically illustrate Theorem~\ref{thm3}. 
Specifically, we compare cases in which the statement (iii) holds and does not hold, and examine the difference between two general ridge estimators over the admissible range of $\rho$.

Consider $n=16$ spatial units arranged on a directed $4\times4$ toroidal lattice. 
Let
\begin{align*}
\boldsymbol{S} = 
\begin{pmatrix}
0 & 1 & 0 & 0\\
0 & 0 & 1 & 0\\
0 & 0 & 0 & 1\\
1 & 0 & 0 & 0
\end{pmatrix}, \quad \boldsymbol{W} = \cfrac{1}{2} \ (\boldsymbol{I}_4 \otimes \boldsymbol{S} + \boldsymbol{S} \otimes \boldsymbol{I}_4).
\end{align*}
Then $\boldsymbol{W}$ is row-standardized and nonsymmetric.
Define
\begin{align*}
\boldsymbol{a}_0 = \cfrac{1}{2}
\begin{pmatrix}
1\\
1\\
1\\
1
\end{pmatrix}, \ \boldsymbol{a}_1 = \cfrac{1}{\sqrt{2}}
\begin{pmatrix}
1\\
0\\
-1\\
0
\end{pmatrix}, \ \boldsymbol{a}_2 = \cfrac{1}{\sqrt{2}}
\begin{pmatrix}
0\\
1\\
0\\
-1
\end{pmatrix}, \
\boldsymbol{a}_3 = \cfrac{1}{2} 
\begin{pmatrix}
1\\
-1\\
1\\
-1
\end{pmatrix}.
\end{align*}
The design matrix is given by
\begin{align*}
\boldsymbol{X} = (\boldsymbol{a}_0 \otimes \boldsymbol{a}_0, \boldsymbol{a}_1 \otimes \boldsymbol{a}_0, \boldsymbol{a}_2 \otimes \boldsymbol{a}_0, \boldsymbol{a}_3 \otimes \boldsymbol{a}_0, \boldsymbol{a}_0 \otimes \boldsymbol{a}_3) \in \mathbb{R}^{16 \times 5},
\end{align*}
which satisfies $\boldsymbol{X}^\top \boldsymbol{X} = \boldsymbol{I}_5$.
A direct calculation gives 
\begin{align*}
(\boldsymbol{W} + \boldsymbol{W}^\top)\boldsymbol{X} = \boldsymbol{X}\boldsymbol{G}_1 \quad  \an \quad \boldsymbol{W}\boldsymbol{W}^\top\boldsymbol{X} = \boldsymbol{X}\boldsymbol{G}_2,
\end{align*}
where 
\begin{align*}
\boldsymbol{G}_1 = \diag(2,1,1,0,0) \quad \an \quad \boldsymbol{G}_2 = \diag\left(1, \dfrac{1}{2}, \dfrac{1}{2},0,0\right).
\end{align*}
Here, $\diag(\cdot)$ denotes the diagonal matrix.
We compare two matrices $\boldsymbol{K}_1 = \diag(0,0,0,1,1)$ and $\boldsymbol{K}_2 = \boldsymbol{I}_5$.
Since $\boldsymbol{K}_1\boldsymbol{G}_1 = \boldsymbol{K}_1\boldsymbol{G}_2 = \boldsymbol{0}$, both conditions in Theorem~\ref{thm3} hold for $\boldsymbol{K}_1$.
In contrast, since it holds that
\begin{align*}
\boldsymbol{K}_2\boldsymbol{G}_1 = \boldsymbol{G}_1 \neq \boldsymbol{0} \quad \an \quad \boldsymbol{K}_2\boldsymbol{G}_2 = \boldsymbol{G}_2 \neq \boldsymbol{0},
\end{align*}
the statement (iii) is not satisfied for $\boldsymbol{K}_2$.

For SMA(1) process, the dispersion matrix is $\boldsymbol{\Omega}(\rho) = (\boldsymbol{I}_{16} + \rho\boldsymbol{W})(\boldsymbol{I}_{16} + \rho \boldsymbol{W}^\top)$. 
To compare two general ridge estimators, we calculate
\begin{align*}
D(\rho, \boldsymbol{K}_i) = \left\|\left(\boldsymbol{X}^\top \boldsymbol{\Omega}(\rho)^{-1}\boldsymbol{X} + \boldsymbol{K}_i\right)^{-1}\boldsymbol{X}^\top \boldsymbol{\Omega}(\rho)^{-1} - \left(\boldsymbol{X}^\top\boldsymbol{X} + \boldsymbol{K}_i\right)^{-1}\boldsymbol{X}^\top\right\|_{\mathrm{F}}, \quad i = 1,2
\end{align*}
for $\rho \in \mathcal{P}$.

\begin{figure}[t]
\centering
\caption{Difference between two general ridge estimators for
$\boldsymbol{K}_1=\operatorname{diag}(0,0,0,1,1)$ and
$\boldsymbol{K}_2=\boldsymbol{I}_5$}
\includegraphics[width=0.60\linewidth]
{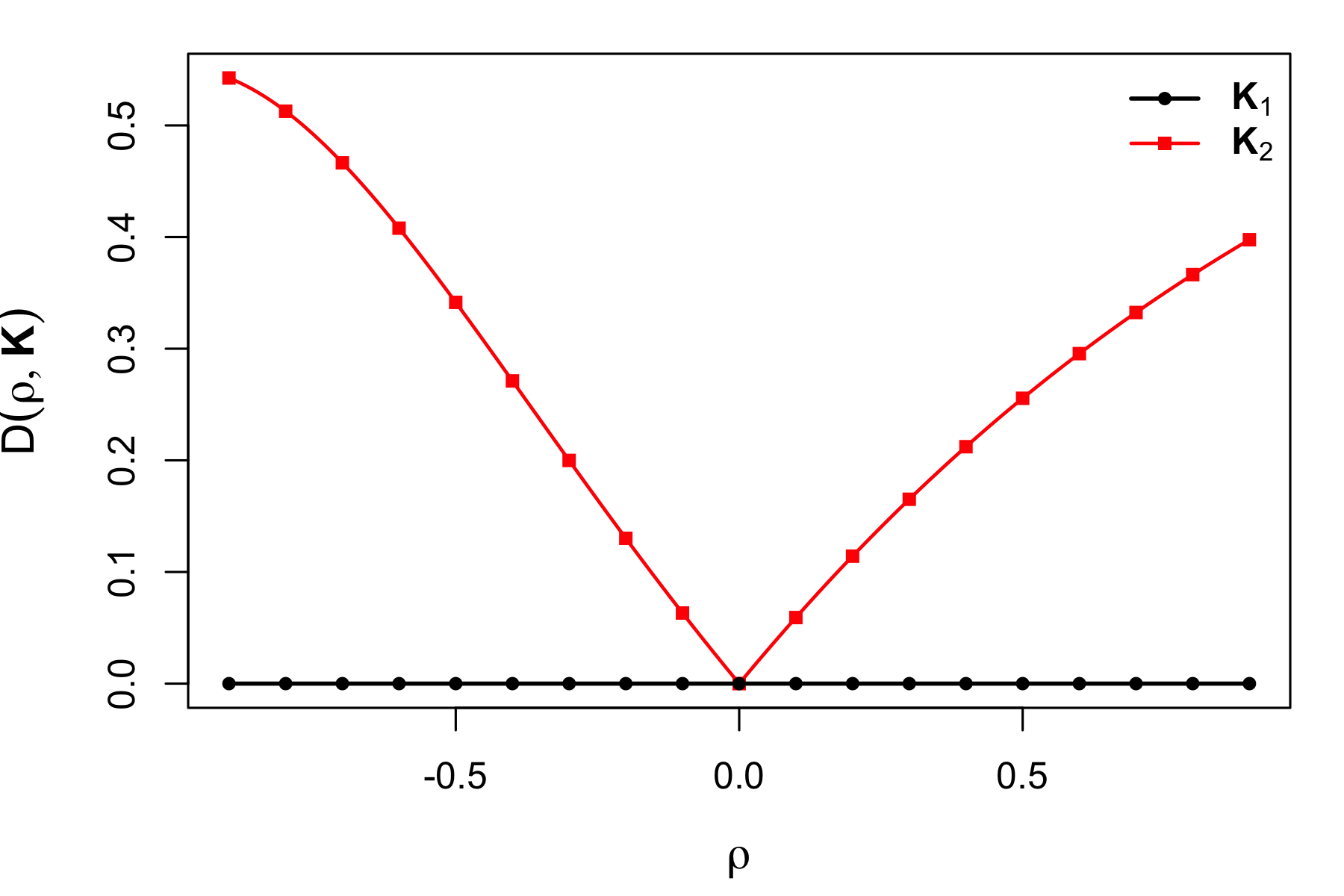}
\label{fig1}
\end{figure}
Figure~\ref{fig1} shows that $D(\rho, \boldsymbol{K}_1)$ remains at the level of numerical round-off error for all $\rho$, whereas $D(\rho, \boldsymbol{K}_2)$ is positive for $\rho \neq 0$.
These results illustrate the practical usefulness of the parameter-free condition in Theorem~\ref{thm3}.
 
\section{Concluding remarks}\label{sec:5}
In this paper, we investigated conditions under which two general ridge estimators coincide under spatial error models. 
First, in the general linear model, we derived a necessary and sufficient condition based on the commutativity of an extended dispersion matrix and an orthogonal projector, thereby extending the classical condition of \cite{RefZ67} to general ridge estimation. 
We then established algebraic conditions that do not depend on the unknown spatial correlation coefficient $\rho$ for the SAR(1) and SMA(1) processes. 
Additionally, we obtained a necessary and sufficient condition for the equality of two general ridge estimators for all $\rho \in \mathcal{P}$ and characterized all matrices $\boldsymbol{K}\in\MS^N(k)$ satisfying this condition. 
The numerical experiment showed that the two estimators coincide up to numerical round-off error when the proposed conditions are satisfied, whereas a clear difference arises when they are not satisfied. 
These results indicate that checking the proposed parameter-free conditions can simplify the two-step estimation procedure by eliminating the need to estimate $\rho$.

Several issues remain for future research. 
First, it would be of interest to extend the present results from SAR(1) and SMA(1) processes to higher-order spatial error processes.
Another important direction is to derive corresponding equality conditions for singular general linear models, that is, for the case where the design matrix is not full-rank or the dispersion matrix is singular.

\section*{Acknowledgements}
The author is also deeply grateful to his supervisor, Prof. Koji Tsukuda, for his insightful advice.

\appendix

\section*{Appendix}

\addcontentsline{toc}{section}{Appendix}

\section{Proofs}\label{app:proofs}

\subsection{Proof of Theorem~\ref{thm1}}

\begin{proof} 
It suffices to show that \eqref{Zcond} is equivalent to \eqref{Mcond2}.

First, we assume that $\boldsymbol{P}_Z \widetilde{\boldsymbol{\Omega}} = \widetilde{\boldsymbol{\Omega}}\boldsymbol{P}_Z$.
Postmultiplying by $\boldsymbol{Z}$ yields 
\begin{align*}
\boldsymbol{P}_Z \widetilde{\boldsymbol{\Omega}} \boldsymbol{Z} = \widetilde{\boldsymbol{\Omega}}\boldsymbol{Z}
\quad \Leftrightarrow \quad  \widetilde{\boldsymbol{\Omega}}\boldsymbol{Z} = \boldsymbol{Z}(\boldsymbol{Z}^\top \boldsymbol{Z})^{-1}\boldsymbol{Z}^\top\widetilde{\boldsymbol{\Omega}}\boldsymbol{Z},
\end{align*}
which implies that $\MC(\widetilde{\boldsymbol{\Omega}}\boldsymbol{Z}) \subseteq \MC(\boldsymbol{Z})$.
Since $\widetilde{\boldsymbol{\Omega}}$ is nonsingular, we have $\rk(\widetilde{\boldsymbol{\Omega}}\boldsymbol{Z}) = \rk(\boldsymbol{Z})$.
Therefore, we obtain $\MC(\widetilde{\boldsymbol{\Omega}}\boldsymbol{Z}) = \MC(\boldsymbol{Z})$.

Conversely, suppose that $\MC(\widetilde{\boldsymbol{\Omega}}\boldsymbol{Z}) = \MC(\boldsymbol{Z})$.
It follows from $\MC(\widetilde{\boldsymbol{\Omega}}\boldsymbol{Z}) \subseteq  \MC(\boldsymbol{P}_Z)$ that 
\begin{align}\label{1}
&(\boldsymbol{I}_{n+k} - \boldsymbol{P}_Z)\widetilde{\boldsymbol{\Omega}}\boldsymbol{P}_Z = \boldsymbol{0} \notag\\
\Leftrightarrow \quad & \widetilde{\boldsymbol{\Omega}}\boldsymbol{P}_Z = \boldsymbol{P}_Z\widetilde{\boldsymbol{\Omega}}\boldsymbol{P}_Z.
\end{align}
The transpose of \eqref{1} yields
\begin{align}\label{2}
\boldsymbol{P}_Z\widetilde{\boldsymbol{\Omega}} = \boldsymbol{P}_Z \widetilde{\boldsymbol{\Omega}}\boldsymbol{P}_Z.
\end{align}
From \eqref{1} and \eqref{2}, we have $\boldsymbol{P}_Z\widetilde{\boldsymbol{\Omega}} = \widetilde{\boldsymbol{\Omega}}\boldsymbol{P}_Z$.
This completes the proof.
\end{proof}

\subsection{Proof of Theorem~\ref{thm2}}
\begin{proof}
Under \eqref{new}, there exist matrices $\boldsymbol{G}_1, \boldsymbol{G}_2 \in \mathbb{R}^{k \times k}$ such that 
\begin{align*}
\begin{pmatrix}
\boldsymbol{WX}\\
\boldsymbol{0}
\end{pmatrix}
= 
\begin{pmatrix}
\boldsymbol{X}\\
\boldsymbol{K}
\end{pmatrix}\boldsymbol{G}_1 \quad \an \quad 
\begin{pmatrix}
\boldsymbol{W}^\top\boldsymbol{X}\\
\boldsymbol{0}
\end{pmatrix}
= 
\begin{pmatrix}
\boldsymbol{X}\\
\boldsymbol{K}
\end{pmatrix}\boldsymbol{G}_2.
\end{align*}
First, we consider the case where $\boldsymbol{\Omega} = (\boldsymbol{I}_n + \rho \boldsymbol{W})(\boldsymbol{I}_n + \rho \boldsymbol{W}^\top)$.
Then it holds that
\begin{align*}
\begin{pmatrix}
\boldsymbol{\Omega}\boldsymbol{X}\\
\boldsymbol{K}
\end{pmatrix}
= \begin{pmatrix}
\boldsymbol{X}\\
\boldsymbol{K}
\end{pmatrix}
(\boldsymbol{I}_k + \rho \boldsymbol{G}_1)(\boldsymbol{I}_k + \rho \boldsymbol{G}_2).
\end{align*}
Since the matrix on the left-hand side has full column rank $k$,  $(\boldsymbol{I}_k + \rho \boldsymbol{G}_1)(\boldsymbol{I}_k + \rho \boldsymbol{G}_2)$ is nonsingular.
Hence \eqref{Mcond} holds.

Next, we consider the case where $\boldsymbol{\Omega} = (\boldsymbol{I}_n - \rho \boldsymbol{W})^{-1}(\boldsymbol{I}_n - \rho \boldsymbol{W}^\top)^{-1}$.
Noting that \eqref{Mcond} is equivalent to
\begin{align*}
\MC\left(\begin{matrix}
\boldsymbol{\Omega}^{-1}\boldsymbol{X}\\
\boldsymbol{K}
\end{matrix}\right) = \MC\left(\begin{matrix}
\boldsymbol{X}\\
\boldsymbol{K}
\end{matrix}\right),
\end{align*}
we can demonstrate the statement in a similar manner.
This completes the proof.
\end{proof}

\subsection{Proof of Theorem~\ref{thm3}}
We introduce a technical lemma to prove Theorem~\ref{thm3}.
\begin{lem}\label{lem}
Let $\boldsymbol{\Omega}$ be written in the form
$\boldsymbol{\Omega} = (\boldsymbol{I}_n + \rho \boldsymbol{W})(\boldsymbol{I}_n + \rho \boldsymbol{W}^\top)$
with $\rho \neq 0$.
Then the condition \eqref{Mcond} is equivalent to
\begin{align}\label{column}
\MC\left(\begin{matrix}
(\boldsymbol{W} + \boldsymbol{W}^\top + \rho \boldsymbol{WW}^\top)\boldsymbol{X}\\
\boldsymbol{0}
\end{matrix}\right) \subseteq \MC\left(
\begin{matrix} 
\boldsymbol{X}\\
\boldsymbol{K}
\end{matrix}\right).
\end{align}
\end{lem}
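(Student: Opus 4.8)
The plan is to restate both conditions in the matrix language of Remark~\ref{rm1}(a) and then prove the two implications separately. Write $\boldsymbol{M}=\boldsymbol{W}+\boldsymbol{W}^\top+\rho\boldsymbol{W}\boldsymbol{W}^\top$. By Remark~\ref{rm1}(a), a block matrix $\begin{pmatrix}\boldsymbol{N}\\ \boldsymbol{0}\end{pmatrix}$ has all its columns in $\MC\left(\begin{pmatrix}\boldsymbol{X}\\ \boldsymbol{K}\end{pmatrix}\right)$ if and only if $\boldsymbol{N}=\boldsymbol{X}\boldsymbol{G}$ and $\boldsymbol{K}\boldsymbol{G}=\boldsymbol{0}$ for some $\boldsymbol{G}\in\mathbb{R}^{k\times k}$. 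Thus \eqref{new} amounts to the existence of $\boldsymbol{G}_1,\boldsymbol{G}_2$ with $\boldsymbol{W}\boldsymbol{X}=\boldsymbol{X}\boldsymbol{G}_1$, $\boldsymbol{W}^\top\boldsymbol{X}=\boldsymbol{X}\boldsymbol{G}_2$ and $\boldsymbol{K}\boldsymbol{G}_1=\boldsymbol{K}\boldsymbol{G}_2=\boldsymbol{0}$, while \eqref{column} amounts to the existence of a single $\boldsymbol{G}$ with $\boldsymbol{M}\boldsymbol{X}=\boldsymbol{X}\boldsymbol{G}$ and $\boldsymbol{K}\boldsymbol{G}=\boldsymbol{0}$. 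Since $\rk(\boldsymbol{X})=k$, each such $\boldsymbol{G}$ is uniquely determined by its left-hand matrix, a fact I would use throughout.

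For \eqref{new}$\Rightarrow$\eqref{column} I would argue exactly as in the proof of Corollary~\ref{cor3}. Given $\boldsymbol{G}_1,\boldsymbol{G}_2$ as above, one computes $\boldsymbol{W}\boldsymbol{W}^\top\boldsymbol{X}=\boldsymbol{W}\boldsymbol{X}\boldsymbol{G}_2=\boldsymbol{X}\boldsymbol{G}_1\boldsymbol{G}_2$, hence $\boldsymbol{M}\boldsymbol{X}=\boldsymbol{X}(\boldsymbol{G}_1+\boldsymbol{G}_2+\rho\boldsymbol{G}_1\boldsymbol{G}_2)$. Taking $\boldsymbol{G}=\boldsymbol{G}_1+\boldsymbol{G}_2+\rho\boldsymbol{G}_1\boldsymbol{G}_2$ and using $\boldsymbol{K}\boldsymbol{G}_1=\boldsymbol{K}\boldsymbol{G}_2=\boldsymbol{0}$ gives $\boldsymbol{K}\boldsymbol{G}=\boldsymbol{0}$, so \eqref{column} holds. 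This direction is routine, since the required subspace membership is stable under the linear operations that assemble $\boldsymbol{M}$ from $\boldsymbol{W}$ and $\boldsymbol{W}^\top$.

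The converse \eqref{column}$\Rightarrow$\eqref{new} is where I expect the real difficulty to lie. The hypothesis supplies only the single relation $\boldsymbol{M}\boldsymbol{X}=\boldsymbol{X}\boldsymbol{G}$ with $\boldsymbol{K}\boldsymbol{G}=\boldsymbol{0}$, whereas the conclusion demands the two separate relations for $\boldsymbol{W}\boldsymbol{X}$ and $\boldsymbol{W}^\top\boldsymbol{X}$. My first step would be to project onto the orthogonal complement of $\MC(\boldsymbol{X})$: choosing $\boldsymbol{Z}$ as in Theorem~\ref{thm1}, \eqref{column} forces $\boldsymbol{Z}^\top\boldsymbol{M}\boldsymbol{X}=\boldsymbol{0}$, i.e. $\boldsymbol{Z}^\top(\boldsymbol{W}+\boldsymbol{W}^\top)\boldsymbol{X}+\rho\,\boldsymbol{Z}^\top\boldsymbol{W}\boldsymbol{W}^\top\boldsymbol{X}=\boldsymbol{0}$, and I would try to separate this into $\boldsymbol{Z}^\top\boldsymbol{W}\boldsymbol{X}=\boldsymbol{0}$ and $\boldsymbol{Z}^\top\boldsymbol{W}^\top\boldsymbol{X}=\boldsymbol{0}$. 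The tools I would bring to bear are the factorization $\boldsymbol{\Omega}=(\boldsymbol{I}_n+\rho\boldsymbol{W})(\boldsymbol{I}_n+\rho\boldsymbol{W}^\top)$, the positive semidefiniteness of $\boldsymbol{W}\boldsymbol{W}^\top$, and the $\boldsymbol{\Omega}$-invariance of $\MC(\boldsymbol{X})$ that \eqref{column} entails (since $\boldsymbol{\Omega}\boldsymbol{X}=\boldsymbol{X}(\boldsymbol{I}_k+\rho\boldsymbol{G})$). The crux, and the step I would scrutinize most carefully, is precisely this disentanglement: a priori the hypothesis controls only one fixed linear combination of $\boldsymbol{W}\boldsymbol{X}$, $\boldsymbol{W}^\top\boldsymbol{X}$ and $\boldsymbol{W}\boldsymbol{W}^\top\boldsymbol{X}$, so recovering the individual memberships appears to need genuine structural input beyond linear algebra, and I would check whether the stated hypotheses suffice or whether an additional assumption on $\boldsymbol{W}$ (or on $\boldsymbol{K}$) is implicitly needed.
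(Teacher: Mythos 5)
You prove \eqref{new} $\Rightarrow$ \eqref{column} correctly (it is the same computation as in the paper's proof of Corollary~\ref{cor3}), but the converse direction, which you leave open, is a genuine gap --- and it cannot be closed, because the equivalence as literally stated is false. The paper's own Remark~\ref{rm5}(c) is a counterexample: take $\boldsymbol{W}$ the $5\times 5$ cyclic permutation matrix, $\boldsymbol{X}$ the trigonometric design matrix given there, $\rho=-2\cos(2\pi/5)$, and any $\boldsymbol{K}\in\MS^+(2)$. There $\boldsymbol{\Omega X}=\boldsymbol{X}$, hence
\begin{align*}
(\boldsymbol{W}+\boldsymbol{W}^\top+\rho\boldsymbol{WW}^\top)\boldsymbol{X}=\rho^{-1}(\boldsymbol{\Omega}-\boldsymbol{I}_n)\boldsymbol{X}=\boldsymbol{0},
\end{align*}
so \eqref{column} holds trivially (take $\boldsymbol{G}=\boldsymbol{0}$); yet \eqref{new} fails, since $\boldsymbol{K}$ nonsingular forces $\boldsymbol{G}_1=\boldsymbol{0}$ in $\boldsymbol{WX}=\boldsymbol{XG}_1$, $\boldsymbol{KG}_1=\boldsymbol{0}$, while $\boldsymbol{WX}\neq\boldsymbol{0}$. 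So your closing suspicion that ``genuine structural input'' is missing was right in the strongest sense: no argument via $\boldsymbol{Z}^\top$-projections or $\boldsymbol{\Omega}$-invariance can recover \eqref{new} from \eqref{column}.

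What you could not have known blind is that the occurrence of ``\eqref{new}'' in the lemma is a mislabel. The paper's proof begins by assuming the existence of a \emph{nonsingular} $\boldsymbol{G}$ with $\boldsymbol{\Omega X}=\boldsymbol{XG}$ and $\boldsymbol{K}=\boldsymbol{KG}$, which by Remark~\ref{rm1}(a) is the unfolding of condition \eqref{Mcond}, not of \eqref{new}; and in the proof of Corollary~\ref{cor4} the lemma is applied to condition (ii), which again is \eqref{Mcond} for a given $\rho$. The statement actually being proved is therefore \eqref{Mcond} $\Leftrightarrow$ \eqref{column}, and its proof is short. Necessity: write $\boldsymbol{\Omega}=\boldsymbol{I}_n+\rho(\boldsymbol{W}+\boldsymbol{W}^\top+\rho\boldsymbol{WW}^\top)$; from $\boldsymbol{\Omega X}=\boldsymbol{XG}$, $\boldsymbol{K}=\boldsymbol{KG}$ with $\boldsymbol{G}$ nonsingular, the matrix $\boldsymbol{G}_*=\rho^{-1}(\boldsymbol{G}-\boldsymbol{I}_k)$ satisfies $(\boldsymbol{W}+\boldsymbol{W}^\top+\rho\boldsymbol{WW}^\top)\boldsymbol{X}=\boldsymbol{XG}_*$ and $\boldsymbol{KG}_*=\boldsymbol{0}$, i.e.\ \eqref{column}. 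Sufficiency: from $(\boldsymbol{W}+\boldsymbol{W}^\top+\rho\boldsymbol{WW}^\top)\boldsymbol{X}=\boldsymbol{XG}$ and $\boldsymbol{KG}=\boldsymbol{0}$ one gets $\boldsymbol{\Omega X}=\boldsymbol{X}(\boldsymbol{I}_k+\rho\boldsymbol{G})$ and $\boldsymbol{K}=\boldsymbol{K}(\boldsymbol{I}_k+\rho\boldsymbol{G})$, giving inclusion of the stacked column spaces; equality follows because both stacked matrices have rank $k$ ($\boldsymbol{\Omega}$ is nonsingular and $\rk(\boldsymbol{X})=k$), so the inclusion is between subspaces of equal dimension. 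Your forward computation, transplanted to this corrected statement, is exactly the paper's argument; the converse you were stuck on becomes the easy rank step above.
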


\begin{proof}
Since $\boldsymbol{\Omega}\boldsymbol{X} = \boldsymbol{X} + \rho(\boldsymbol{W} + \boldsymbol{W}^\top + \rho\boldsymbol{W}\boldsymbol{W}^\top)\boldsymbol{X}$, we have
\begin{align*}
\begin{pmatrix}
\boldsymbol{\Omega}\boldsymbol{X}\\
\boldsymbol{K}
\end{pmatrix}
= 
\begin{pmatrix}
\boldsymbol{X}\\
\boldsymbol{K}
\end{pmatrix}
+ 
\rho
\begin{pmatrix}
(\boldsymbol{W} + \boldsymbol{W}^\top + \rho\boldsymbol{W}\boldsymbol{W}^\top)\boldsymbol{X}\\
\boldsymbol{0}
\end{pmatrix}.
\end{align*}
To see the necessity, suppose that \eqref{Mcond} holds.
Then there exists a nonsingular matrix $\boldsymbol{G} \in \mathbb{R}^{k \times k}$ such that
\begin{align*}
\begin{pmatrix}
\boldsymbol{\Omega}\boldsymbol{X}\\
\boldsymbol{K}
\end{pmatrix}
= 
\begin{pmatrix}
\boldsymbol{X}\\
\boldsymbol{K}
\end{pmatrix}\boldsymbol{G}.
\end{align*}
Therefore, we obtain
\begin{align*}
\begin{pmatrix}
(\boldsymbol{W} + \boldsymbol{W}^\top + \rho \boldsymbol{WW}^\top)\boldsymbol{X}\\
\boldsymbol{0}
\end{pmatrix} = 
\begin{pmatrix}
\boldsymbol{X}\\
\boldsymbol{K}
\end{pmatrix}
\rho^{-1}(\boldsymbol{G} - \boldsymbol{I}_k),
\end{align*}
which proves \eqref{column}.

Next, to see the sufficiency, suppose that \eqref{column} is satisfied.
Then there exists a matrix $\boldsymbol{G} \in \mathbb{R}^{k \times k}$ such that
\begin{align*}
\begin{pmatrix}
(\boldsymbol{W} + \boldsymbol{W}^\top + \rho \boldsymbol{WW}^\top)\boldsymbol{X}\\
\boldsymbol{0}
\end{pmatrix} = \begin{pmatrix}
\boldsymbol{X}\\
\boldsymbol{K}
\end{pmatrix}\boldsymbol{G}.
\end{align*}
It holds that
\begin{align*}
\begin{pmatrix}
\boldsymbol{\Omega}\boldsymbol{X}\\
\boldsymbol{K}
\end{pmatrix}
= 
\begin{pmatrix}
\boldsymbol{X}\\
\boldsymbol{K}
\end{pmatrix}
(\boldsymbol{I}_k + \rho \boldsymbol{G}).
\end{align*}
Since $\boldsymbol{I}_k + \rho \boldsymbol{G}$ is nonsingular, \eqref{Mcond} holds.
This completes the proof.
\end{proof}

From Lemma~\ref{lem}, we prove Theorem~\ref{thm3}.
\begin{proof}[Proof of Theorem~\ref{thm3}]
(i) $\Rightarrow$ (ii): This statement is trivial. \vspace{1ex}\\
(ii) $\Rightarrow$ (iii):
It follows from Lemma~\ref{lem} that the condition (ii) holds if and only if 
\begin{align*}
\MC\left(\begin{matrix}
(\boldsymbol{W}+\boldsymbol{W}^\top + \rho_1 \boldsymbol{W}\boldsymbol{W}^\top)\boldsymbol{X}\\
\boldsymbol{0}
\end{matrix}\right) \subseteq \MC\left(\begin{matrix}
\boldsymbol{X}\\
\boldsymbol{K}
\end{matrix}\right)
\end{align*}
and 
\begin{align*}
\MC\left(\begin{matrix}
(\boldsymbol{W}+\boldsymbol{W}^\top + \rho_2 \boldsymbol{W}\boldsymbol{W}^\top)\boldsymbol{X}\\
\boldsymbol{0}
\end{matrix}\right) \subseteq \MC\left(\begin{matrix}
\boldsymbol{X}\\
\boldsymbol{K}
\end{matrix}\right) 
\end{align*}
simultaneously hold for two different $\rho_1, \rho_2 \in \mathcal{P}$.
This means that there exist matrices $\boldsymbol{G}_1, \boldsymbol{G}_2 \in \mathbb{R}^{k \times k}$ such that
\begin{align}\label{eq1}
\begin{pmatrix}
(\boldsymbol{W}+\boldsymbol{W}^\top + \rho_1 \boldsymbol{W}\boldsymbol{W}^\top)\boldsymbol{X}\\
\boldsymbol{0}
\end{pmatrix} = 
\begin{pmatrix}
\boldsymbol{X}\\
\boldsymbol{K}
\end{pmatrix}
\boldsymbol{G}_1
\end{align}
and 
\begin{align}\label{eq2}
\begin{pmatrix}
(\boldsymbol{W}+\boldsymbol{W}^\top + \rho_2 \boldsymbol{W}\boldsymbol{W}^\top)\boldsymbol{X}\\
\boldsymbol{0}
\end{pmatrix} = 
\begin{pmatrix}
\boldsymbol{X}\\
\boldsymbol{K}
\end{pmatrix}
\boldsymbol{G}_2
\end{align}
Subtracting \eqref{eq2} from \eqref{eq1} yields
\begin{align}\label{eq3}
\MC\left(\begin{matrix}
\boldsymbol{W}\boldsymbol{W}^\top\boldsymbol{X}\\
\boldsymbol{0}
\end{matrix}\right) \subseteq \MC\left(\begin{matrix}
\boldsymbol{X}\\
\boldsymbol{K}
\end{matrix}\right)
\end{align} 
under $\rho_1 \neq \rho_2$.
From \eqref{eq3}, there exists a matrix $\boldsymbol{F} \in \mathbb{R}^{k \times k}$ such that 
\begin{align*}
\begin{pmatrix}
\boldsymbol{W}\boldsymbol{W}^\top\boldsymbol{X}\\
\boldsymbol{0}
\end{pmatrix}
= 
\begin{pmatrix}
\boldsymbol{X}\\
\boldsymbol{K}
\end{pmatrix}
\boldsymbol{F}
\end{align*}
Then it holds that 
\begin{align*}
\begin{pmatrix}
(\boldsymbol{W} + \boldsymbol{W}^\top)\boldsymbol{X}\\
\boldsymbol{0}
\end{pmatrix}
= 
\begin{pmatrix}
\boldsymbol{X}\\
\boldsymbol{K}
\end{pmatrix}
(\boldsymbol{G}_1 - \rho_1\boldsymbol{F})
\end{align*}
for $\rho_1 \in \mathcal{P}$.
Therefore, we have 
\begin{align*}
\MC\left(\begin{matrix}
(\boldsymbol{W}+\boldsymbol{W}^\top)\boldsymbol{X}\\
\boldsymbol{0}
\end{matrix}\right) \subseteq \MC\left(\begin{matrix}
\boldsymbol{X}\\
\boldsymbol{K}
\end{matrix}\right).
\end{align*}
(iii) $\Rightarrow$ (i): Suppose that there exist matrices $\boldsymbol{F}_1, \boldsymbol{F}_2 \in \mathbb{R}^{k \times k}$ such that 
\begin{align*}
\begin{pmatrix}
(\boldsymbol{W} + \boldsymbol{W}^\top)\boldsymbol{X}\\
\boldsymbol{0}
\end{pmatrix}
= 
\begin{pmatrix}
\boldsymbol{X}\\
\boldsymbol{K}
\end{pmatrix}
\boldsymbol{F}_1
\quad \an \quad \begin{pmatrix}
\boldsymbol{W}\boldsymbol{W}^\top\boldsymbol{X}\\
\boldsymbol{0}
\end{pmatrix}
= 
\begin{pmatrix}
\boldsymbol{X}\\
\boldsymbol{K}
\end{pmatrix}
\boldsymbol{F}_2.
\end{align*}
Then it holds that 
\begin{align*}
\begin{pmatrix}
(\boldsymbol{W}+\boldsymbol{W}^\top + \rho \boldsymbol{W}\boldsymbol{W}^\top)\boldsymbol{X}\\
\boldsymbol{0}
\end{pmatrix}
=
\begin{pmatrix}
\boldsymbol{X}\\
\boldsymbol{K}
\end{pmatrix}
(\boldsymbol{F}_1 + \rho \boldsymbol{F}_2)
\end{align*}
which implies that \eqref{column} is satisfied.
By Lemma~\ref{lem}, the statement (i) is true.
This completes the proof.
\end{proof}

\subsection{Proof of Corollary~\ref{cor1}}
\begin{proof}
Under the assumptions, the statement (iii) in Theorem~\ref{thm3} holds if and only if  $\boldsymbol{K}\boldsymbol{R}_1 = \boldsymbol{K}\boldsymbol{R}_2 = \boldsymbol{0}$, or equivalently, $\boldsymbol{KR} = \boldsymbol{0}$.
See also Remark~\ref{rem33}-(b).
Since $\boldsymbol{K}$ is symmetric, $\boldsymbol{KR} = \boldsymbol{0}$ is equivalent to
$\MC(\boldsymbol{K}) \subseteq \MN(\boldsymbol{R}^\top) = \MC(\boldsymbol{Q})$.
Therefore, we have
\begin{align*}
\boldsymbol{K} = \boldsymbol{Q}\boldsymbol{Q}^\top\boldsymbol{K}\boldsymbol{Q}\boldsymbol{Q}^\top = \boldsymbol{Q}\boldsymbol{H}\boldsymbol{Q}^\top,
\end{align*}
where $\boldsymbol{H} = \boldsymbol{Q}^\top\boldsymbol{K}\boldsymbol{Q} \in \MS^N(q)$.
The converse follows from $\boldsymbol{Q}^\top \boldsymbol{R} = \boldsymbol{0}$.
This completes the proof.
\end{proof}

\subsection{Proof of Theorem~\ref{thm4}}
We prove the result only for SMA(1) process, where $\boldsymbol{\Omega} = (\boldsymbol{I}_n + \rho \boldsymbol{W})(\boldsymbol{I}_n + \rho \boldsymbol{W}^\top)$.
As in the proof of Theorem~\ref{thm2}, the result for SAR(1) process can be established analogously.
\begin{proof}
Note that $\widetilde{\boldsymbol{\Omega}}$ can be rewritten as
\begin{align*}
\widetilde{\boldsymbol{\Omega}}  = (\boldsymbol{I}_{n+k} + \rho \widetilde{\boldsymbol{W}})(\boldsymbol{I}_{n+k} + \rho \widetilde{\boldsymbol{W}}^\top)  = \boldsymbol{I}_{n+k} + \rho (\widetilde{\boldsymbol{W}} + \widetilde{\boldsymbol{W}}^\top) + \rho^2 \widetilde{\boldsymbol{W}}\widetilde{\boldsymbol{W}}^\top.
\end{align*}
Since $\widetilde{\boldsymbol{W}}\boldsymbol{P}_Z = \boldsymbol{P}_Z \widetilde{\boldsymbol{W}}$, it holds that
\begin{align*}
(\widetilde{\boldsymbol{W}} + \widetilde{\boldsymbol{W}}^\top)\boldsymbol{P}_Z = \boldsymbol{P}_Z(\widetilde{\boldsymbol{W}} + \widetilde{\boldsymbol{W}}^\top), \quad \widetilde{\boldsymbol{W}}\widetilde{\boldsymbol{W}}^\top \boldsymbol{P}_Z = \boldsymbol{P}_Z\widetilde{\boldsymbol{W}}\widetilde{\boldsymbol{W}}^\top.
\end{align*}
This implies that \eqref{Zcond} holds, and thus the equality \eqref{G2E} holds from Theorem~\ref{thm1}.
This completes the proof.
\end{proof}

\subsection{Proof of Corollary~\ref{cor2}}
We prove the result for the SMA(1) process, where $\boldsymbol{\Omega} = (\boldsymbol{I}_n + \rho \boldsymbol{W})(\boldsymbol{I}_n + \rho \boldsymbol{W}^\top)$.
\begin{proof}
The sufficiency follows immediately from Theorem~\ref{thm4}. We therefore prove the necessity.
Suppose that the equality \eqref{G2E} holds. 
It follows from Theorem~\ref{thm1} that we have \eqref{Zcond},
where $\widetilde{\boldsymbol{\Omega}} = (\boldsymbol{I}_{n+k} + \rho \widetilde{\boldsymbol{W}})^2$.
Because $\boldsymbol{W}$ is symmetric and row standardized, all the eigenvalues of $\widetilde{\boldsymbol{W}}$ lie in the interval $[-1,1]$.
Hence, $\rho \in \mathcal{P}$ implies that $\boldsymbol{A} = \boldsymbol{I}_{n+k} + \rho\widetilde{\boldsymbol{W}}$ is positive definite.
Therefore, $\boldsymbol{A}$ is the unique positive definite square root of $\widetilde{\boldsymbol{\Omega}}$.
Since $\widetilde{\boldsymbol{\Omega}}$ and $\boldsymbol{P}_Z$ commute, they are simultaneously orthogonally diagonalizable. 
 Thus, there exist an orthogonal matrix $\boldsymbol{U}$ and diagonal matrices $\boldsymbol{\Lambda}$ and $\boldsymbol{D}$ such that $\widetilde{\boldsymbol{\Omega}} = \boldsymbol{U}\boldsymbol{\Lambda}\boldsymbol{U}^\top$ and $\boldsymbol{P}_Z = \boldsymbol{U}\boldsymbol{D}\boldsymbol{U}^\top.$
Then $\boldsymbol{A}$ can be expressed as $\boldsymbol{A} = \boldsymbol{U}\boldsymbol{\Lambda}^{1/2}\boldsymbol{U}^\top$, where $\boldsymbol{\Lambda}^{1/2}$ denotes the diagonal matrix whose diagonal entries are the positive square roots of the corresponding diagonal entries of $\boldsymbol{\Lambda}$.
Since $\boldsymbol{\Lambda}^{1/2}$ and $\boldsymbol{D}$ are diagonal, it holds that $\boldsymbol{A}\boldsymbol{P}_Z = \boldsymbol{P}_Z \boldsymbol{A}$.
Under $\rho \neq 0$, it follows that
\begin{align*}
(\boldsymbol{I}_{n+k} + \rho\widetilde{\boldsymbol{W}})\boldsymbol{P}_Z = \boldsymbol{P}_Z(\boldsymbol{I}_{n+k} + \rho\widetilde{\boldsymbol{W}}) \quad \Leftrightarrow \quad \widetilde{\boldsymbol{W}}\boldsymbol{P}_Z = \boldsymbol{P}_Z\widetilde{\boldsymbol{W}}.
\end{align*}
Thus, \eqref{new2} holds. 
This completes the proof.
\end{proof}

\begin{thebibliography}{99}	
%
\bibitem[Arnold and Stahlecker(2000)]{RefAS00}
Arnold, B.~F., Stahlecker, P. (2000).
Another view of the Kuks--Olman estimator.
\textit{J. Statist. Plann. Inference} \textbf{89}, no.1--2, 169--174.
%
\bibitem[Arnold and Stahlecker(2011)]{RefAS11}
Arnold, B.~F., Stahlecker, P. (2011).
An unexpected property of minimax estimation in the relative squared error approach to linear regression analysis.
\textit{Metrika} \textbf{74}, no.3, 397--407.
%
\bibitem[Baksalary and Markiewicz(1988)]{RefBM88}
Baksalary, J.~K., Markiewicz, A. (1988). Admissible linear estimators in the general Gauss-Markov model. \textit{J. Statist. Plann. Inference} \textbf{19}, no.3, 349--359.
%
\bibitem[Christopoulos(2025)]{RefC25}
Christopoulos, K. (2025).
Causal inference with spatial econometric models: Challenges and applications for epidemiologic research.
\textit{Geogr. Anal.} \textbf{57}, no.4, 676--685.
%
\bibitem[Drygas(1983)]{RefD83}
Drygas, H. (1983).
Sufficiency and completeness in the general Gauss--Markov model.
\textit{Sankhy\={a} Ser.~A} \textbf{45}, no.1, 88--98.
%
\bibitem[Gotu(2001)]{RefG01}
Gotu, B. (2001). The equality of OLS and GLS estimators in the linear regression model when the disturbances are spatially correlated. \textit{Statist. Papers} \textbf{42}, 253--263.
%
\bibitem[Hoerl and Kennard(1970)]{RefHK70}
Hoerl, A. E., Kennard, R. W. (1970). Ridge regression: biased estimation for nonorthogonal problems. \textit{Technometrics} \textbf{12}, no.1, 55--67.
%
\bibitem[Kruskal(1968)]{RefK68}
Kruskal, W. (1968).
When are Gauss--Markov and least squares estimators identical? A coordinate-free approach.
\textit{Ann. Math. Statist.} \textbf{39}, 70--75.
%
\bibitem[Kuromiya(2024)]{RefK24}
Kuromiya, K. (2024).
Spatial regression analysis of political gender gap in Japan.
\textit{Asia-Pac. J. Reg. Sci.} \textbf{8}, 1163--1184.
%
\bibitem[Markiewicz(1996)]{RefM96}
Markiewicz, A. (1996).
Characterization of general ridge estimators.
\textit{Statist. Probab. Lett.} \textbf{27}, no.2, 145--148.
%
\bibitem[Mukasa(2026)]{RefM26}
Mukasa, H. (2026).
Linear Estimators, Bayes. \textit{Wiley StatsRef: Statistics Reference Online}.
Wiley, Chichester.
%
\bibitem[Mukasa and Tsukuda(2025)]{RefMT25}
Mukasa, H., Tsukuda, K. (2025).
Equality between two general ridge estimators and equivalence of their residual sums of squares. \textit{Statist. Papers} \textbf{66}, no.1, 27.
%
\bibitem[Nardelli and Salvini(2025)]{RefNS25}
Nardelli, V., Salvini, N. (2025).
Evaluating the robustness of spatial econometric estimators.
\textit{Netw. Spat. Econ.}, published online.
%
\bibitem[Rao(1975)]{RefR75}
Rao, C.~R. (1975).
Simultaneous estimation of parameters in different linear models and applications to biometric problems.
\textit{Biometrics} \textbf{31}, 545--553.
%
\bibitem[Rao(1976)]{RefR76}
Rao, C.~R. (1976).
Estimation of parameters in a linear model.
\textit{Ann. Statist.} \textbf{4}, no.6, 1023--1037.
%
\bibitem[Tan and Elhorst(2024)]{RefTE24}
Tan, C., Elhorst, J. P. (2024).
The spatial autoregressive panel data model with spatial moving average errors.
\textit{Geogr. Anal.} \textbf{56}, no.1, 40--61.
%
\bibitem[Tsukuda and Kurata(2020)]{RefTK20}
Tsukuda, K., Kurata, H. (2020).
Covariance structure associated with an equality between two general ridge estimators.
\textit{Statist. Papers} \textbf{61}, no.3, 1069--1084.
%
\bibitem[Vijverberg(2024)]{RefV24}
Vijverberg, C.-P. C. (2024).
Income inequality and household debt:
A U.S. state-level spatial analysis.
\textit{Econom. Model.} \textbf{138}, Paper No.106772.
%
\bibitem[Zyskind(1967)]{RefZ67}
Zyskind, G. (1967).
On canonical forms, non-negative covariance matrices and best and simple least squares linear estimators in linear models.
\textit{Ann. Math. Statist.} \textbf{38}, 1092--1109.
%
\end{thebibliography}
\end{document}